\newtheorem{thm}{Theorem}
\newtheorem{lemma}{Lemma}
\theoremstyle{definition}
\newtheorem{remark}{Remark}
\def\index#1{}
\begin{document}

\begin{frontmatter}
\pretitle{Research Article}

\title{Logarithmic L\'{e}vy process directed by Poisson subordinator}

\author{\inits{P.}\fnms{Penka}~\snm{Mayster}\thanksref{cor1}\ead[label=e1]{penka.mayster@math.bas.bg}}
\author{\inits{A.}\fnms{Assen}~\snm{Tchorbadjieff}\ead[label=e2]{assen@math.bas.bg}}
\thankstext[type=corresp,id=cor1]{Corresponding author.}
\address{Institute of Mathematics and Informatics,
\institution{Bulgarian Academy of Sciences},
Acad.~G.~Bonchev~street,~Bloc~8, 1113~Sofia,~\cny{Bulgaria}}



\markboth{P. Mayster, A. Tchorbadjieff}{Logarithmic L\'{e}vy process directed by Poisson subordinator}

\begin{abstract}
Let $ \{L(t), t\geq 0 \}$ be a L\'{e}vy process with representative
random variable $ L(1)$ defined by the infinitely divisible logarithmic
series distribution. We study here the transition probability and
L\'{e}vy measure of this process. We also define two subordinated
processes. The first one, $Y(t)$, is a Negative-Binomial process
$X(t)$ directed by Gamma process. The second process, $Z(t)$, is a
Logarithmic L\'{e}vy process $L(t)$ directed by Poisson process. For
them, we prove that the Bernstein functions of the processes
$ L(t)$ and $ Y(t)$ contain the iterated logarithmic function. In
addition, the L\'{e}vy measure of the subordinated process $ Z(t)$ is a
shifted L\'{e}vy measure of the Negative-Binomial process $ X(t)$. We
compare the properties of these processes, knowing that the total masses
of corresponding L\'{e}vy measures are equal.
\end{abstract}
\begin{keywords}
\kwd{Infinitely divisible logarithmic series distribution}
\kwd{Bernstein function}
\kwd{L\'{e}vy process}
\kwd{change of time}
\kwd{compound Poisson process}
\kwd{Gauss hypergeometric function}
\kwd{Stirling numbers}
\kwd{harmonic numbers}
\kwd{partial Bell polynomials}
\end{keywords}
\begin{keywords}[MSC2010]%
\kwd{11B73}
\kwd{33C05}
\kwd{60E07}
\kwd{60J75}
\end{keywords}

\received{\sday{5} \smonth{4} \syear{2019}}
\revised{\sday{13} \smonth{9} \syear{2019}}
\accepted{\sday{13} \smonth{9} \syear{2019}}
\publishedonline{\sday{4} \smonth{10} \syear{2019}}

\end{frontmatter}

\section{Introduction}%
\label{sec1}
Let $ \{L(t), t\geq 0 \}$ be a L\'{e}vy process\index{L\'{e}vy process} with representative
random variable $ L(1)$ defined by the infinitely divisible logarithmic
series\index{infinitely divisible logarithmic series} distribution. The distribution of any L\'{e}vy process\index{L\'{e}vy process} is
completely determined by the distribution of its representative random
variable, which is infinitely divisible \cite{Ste}. The
probability generating function (p.g.f.) of the random variable
$ L(1)$ is expressed by the Gauss hypergeometric function
$\mbox{}_{2}F_{1}(1,1;2;z)$ \cite{JKK}. This makes the usage of
enumerative combinatorics methods indispensable in this study
\cite{Pit}. Thus, using the partial Bell polynomials\index{partial Bell polynomials} we obtain an
explicit representation of the transition probability\index{transition probability} and L\'{e}vy
measure\index{L\'{e}vy measure} of this process. But, first of all, we distinguish two
logarithmic distributions.

The logarithmic series\index{logarithmic series} distribution supported by positive integers
$ N=\{1,2, \ldots\}$ was firstly introduced by R.A. Fisher, A.S. Corbet  and C.B. Williams (1943) \cite{Fisher}. It is not an
infinitely divisible distribution. It is a L\'{e}vy measure\index{L\'{e}vy measure} for the well-known Negative-Binomial process. 
The paper
\cite{Fisher} represents an impressive combination of empirical data and
mathematical analysis, remaining a model for ecology today.

The logarithmic series\index{logarithmic series} distribution supported by nonnegative integers
$ Z_{+}=\{0,1,\allowbreak \ldots\}$ is a particular case of the Kemp generalized
hypergeometric probability distribution (1956) \cite{Kemp}. Its
infinite divisibility was proved by K. Katti (1967)
\cite{Katti}. Infinitely divisible random variables with values in
$Z_{+}$ were first studied by Feller (1968) \cite{Fel}, where it
is shown that on $Z_{+}$ the infinitely divisible distributions\index{infinitely divisible distributions} coincide
with the compound Poisson distributions. A historical review on the
origin of infinitely divisible distributions\index{infinitely divisible distributions} ``from de Finetti's problem
to L\'{e}vy--Khintchine formula'' is presented by F. Mainardi  and
S. Rogosin (2006) \cite{MR}.

At the present time, there are many works related to the topics of
infinite divisibility and discrete distributions. Some of them are
monographs of F.W. Steutel and K. Van Harn \cite{Ste}, and
N.L. Johnson, A.W. Kemp and S. Kotz \cite{JKK}.
Integer-valued L\'{e}vy processes\index{L\'{e}vy process} and their application in financial
econometrics are developed by O.E.~Barndorff-Nielsen, D. Pollard and
N. Shephard \cite{Barn}. The compositions of Poisson and Gamma
processes are investigated by K. Buchak and L. Sakhno in
\cite{BuSa,BS}. The consecutive subordinations\index{consecutive subordinations} of Poisson and Gamma
processes realized on two sequences containing only four new processes
are studied in \cite{Ma}. It is shown there how the additional
randomness, caused by random time, is accumulated. The transformation
of the Poisson random measure and the jump-structure of the subordinated
process\index{subordinated process} is described in \cite{M}. Other interesting integer-valued
Markov processes are derived from Markov branching processes. In the
model of branching particle system with a random initial condition, we
obtained distributions describing the number of particles at time
$t$, corresponding to the compound Poisson processes over the radius of
a flux of particles \cite{MT}.

The subordination\index{subordination} by Bochner is also developed in many books and
articles related to applications in financial mathematics and functional
analysis, see \cite{Sato,Ci}. There is a special Chapter 3 in
\cite{B} devoted to the subordinators. The L\'{e}vy measure\index{L\'{e}vy measure} and
potential kernel are also considered in \cite{Berg}. The
properties of the Bernstein functions\index{Bernstein function} are studied in \cite{Sch}.
The theory of subordinators and inverse subordinators is applied to
study risk processes in insurance models by N. Leonenko et al. in
\cite{KLP,LST}.

Our work in the topic is described in the following
sections. In
Section~\ref{sec2} we introduce the infinitely divisible logarithmic series\index{infinitely divisible logarithmic series}
distribution and its $m$-fold convolution. Our main tools of
investigation are the Gauss hypergeometric function and partial Bell
polynomials,\index{partial Bell polynomials} Stirling numbers\index{Stirling numbers} and harmonic numbers. In Section~\ref{sec3} we
present two methods defining the transition probability\index{transition probability} of the L\'{e}vy
process $ L(t)$ -- starting with the L\'{e}vy measure\index{L\'{e}vy measure} or starting with
p.g.f. $ F(t,s) = E[s^{L(t)}]$ and its Taylor expansion. Then, in the
following two Sections~\ref{sec4} and \ref{sec5} we consider the subordinated processes\index{subordinated process}
$ Y(t)$ and $Z(t)$. They are obtained respectively from
the Negative-Binomial process $X(t)$ directed by the Gamma one and the Logarithmic
L\'{e}vy process\index{L\'{e}vy process} $ L(t)$ directed by the Poisson one. In this study of
subordinated processes,\index{subordinated process} we proceed also by two methods -- either
integrating the transition probability\index{transition probability} of the ground process,\index{ground process} as it is
shown in \cite{Sato}, or constructing the compound Poisson process
with a prior defined L\'{e}vy measure.\index{L\'{e}vy measure} The Bernstein functions\index{Bernstein function} of the
processes $ L(t)$ and $ Y(t)$ contain the iterated logarithmic function.
The L\'{e}vy measure\index{L\'{e}vy measure} of $ Z(t)$ is a shifted L\'{e}vy measure\index{L\'{e}vy measure} of
$ X(t)$. We compare the behaviour of all these processes in order to
understand better the place of the Logarithmic L\'{e}vy process\index{L\'{e}vy process} in the
picture of compound Poisson processes. Several combinatorics identities
arrive as auxiliary results. Finally, some applications derived from
studied processes are explained and demonstrated in Section~\ref{sec6}.

\section{Infinitely divisible logarithmic series\index{infinitely divisible logarithmic series} distribution and the Gauss hypergeometric function}
\label{sec2}
The Gauss hypergeometric function is defined in \cite{JKK}, page
20, for $ |z|<1$ by
\begin{equation*}
\mbox{}_{2}F_{1}(c,d;g;z)=\sum ^{\infty }_{k=0} \frac{[c]_{k \uparrow }[d]_{k
\uparrow }}{[g]_{k \uparrow }} \frac{z^{k}}{k!},
\end{equation*}
where the increasing factorial, known as Pochhammer's symbol, is denoted
as $[c]_{k \uparrow }=c(c+1)\cdots (c+k-1)$, $[c]_{0 \uparrow }=1$. In
particular,
\begin{equation*}
\mbox{}_{2}F_{1}(1,1;2;z)=\sum ^{\infty }_{k=0} \frac{k!k!}{(k+1)!} \frac{z
^{k}}{k!}=\frac{-\log (1-z)}{z}.
\end{equation*}
By definition, given in \cite{Ste}, Chapter 2, Example 11.7, the
infinitely divisible random variable $ L(1)$ with logarithmic series\index{logarithmic series}
distribution has the probability mass function (p.m.f.) supported by
$ \{0, 1, 2,\ldots\}$ and given as follows:
%
\begin{equation}
\label{LL}
P(L(1)=k)=\frac{1}{A} \frac{\alpha ^{k+1}}{(k+1)}, \quad 0<\alpha <1, A=-
\log (1-\alpha ), k=0, 1, \ldots .
\end{equation}
The p.g.f. defined by $F(1,s)=E[s^{L(1)}]$, $|s |\leq 1$, is
\begin{equation*}
F(1,s)=\sum ^{\infty }_{k=0}\frac{\alpha ^{k+1} }{k+1} \frac{s^{k}}{A}=\frac{
\alpha }{A}\sum ^{\infty }_{k=0}\frac{(\alpha s)^{k}}{k+1}= \frac{-
\log (1-\alpha s)}{As}
\end{equation*}
and can be presented as follows:
%
\begin{equation}
\label{G}
F(1,s)=\frac{\alpha }{A} (1+G(s)),\quad G(s)=\sum ^{\infty }_{k=1}\frac{(
\alpha s)^{k}}{k+1},\quad G(1)=\frac{A-\alpha }{\alpha }.
\end{equation}
We remark that the following simple representation is a starting point
of the Taylor expansion,
\begin{equation*}
\mbox{}_{2}F_{1}(1,1;2;\alpha s)= 1+G(s).
\end{equation*}
Let us denote the finite sum of random variables $L_{1}, L_{2},\ldots, L
_{m} $, being independent copies of $L(1)$, as
%
\begin{equation}
\label{L}
L(m)=\sum ^{m}_{j=1}L_{j}.
\end{equation}
By convolution of p.m.f. we express the probability distribution of the
random variable $ L(2)$ by means of the harmonic numbers as follows:
\begin{equation*}
P(L(2)=n )=\left (\frac{\alpha }{A}\right )^{2}
\frac{2\alpha ^{n}}{n+2}\left (1+\frac{1}{2}+\frac{1}{3}+\cdots+
\frac{1}{n+1}\right ),\quad n=0, 1, 2,\ldots.
\end{equation*}
Knowing the infinite divisibility of $L(1)$ we write the p.g.f. of
$L(m)$ (\ref{L}) by the power, $ F(m,s)=\{F(1,s)\}^{m} $, namely:
\begin{equation*}
\left (\frac{-\log (1-\alpha s)}{A s}\right )^{m}=\left (\frac{\alpha
}{A} (1+G(s))\right )^{m} .
\end{equation*}
We present here two methods on expanding $ F(m,s)$ as power series of
$s$, expanding only $(-\log (1-\alpha s))^{m} $, or with the binomial
expanding of $(\frac{\alpha }{A} (1+G(s)))^{m}$. For this purpose, the
function $ G(s)$ is presented as an exponential generating function:\index{exponential generating function}
%
\begin{equation}
\label{GE}
G(s)=\sum ^{\infty }_{k=1} g_{k} \frac{s^{k}}{k!}, \quad g_{k}= \frac{
\alpha ^{k} k!}{k+1}.
\end{equation}
For convenience, we denote the sequences by bullet, as it is shown in
\cite{Pit},
\begin{equation*}
g_{\bullet }= (g_{1}, g_{2}, \ldots).
\end{equation*}
Similarly, the sequences of the powers are expressed by $a^{\bullet }=
(a, a^{2},\ldots)$, and in particular: $1^{\bullet }= (1,1, \ldots)$. In both
cases of expanding we use the Faa di Bruno formula and the partial Bell
polynomials $B_{n,k}$\index{partial Bell polynomials} \cite{Pit}, allowing to express the power
$ [G(s)]^{k}$ as follows:
%
\begin{equation}
\label{B}
\frac{(G(s))^{k} }{k!} =\frac{1}{k!} \left ( \sum ^{\infty }_{j=1}g
_{j} \frac{s^{j}}{j!}\right )^{k} = \sum ^{\infty }_{n=k} B_{n,k}(g
_{\bullet }) \frac{s^{n}}{n!},
\end{equation}
where
\begin{equation*}
B_{n,k}(g_{\bullet })= \sum _{(k_{1},k_{2},\cdots, k_{n})} \frac{n!g_{1}
^{k_{1}}\ldots g_{n}^{k_{n}}}{k_{1}!(1!)^{k_{1}}\cdots k_{n}!(n!)^{k_{n}}}.
\end{equation*}
The sum is over all partitions of $n$ into $k$ parts, that is over all
nonnegative integer solutions $(k_{1}, k_{2},\ldots, k_{n})$ of the
equations:
\begin{equation*}
k_{1}+2k_{2}+\cdots+nk_{n}=n,\quad k_{1}+k_{2}+\cdots+k_{n}=k.
\end{equation*}
For example, $ B_{n,1}(x_{\bullet })=x_{n}$, $ B_{n,n}(x_{\bullet })=(x
_{1})^{n}$ and
%
\begin{equation}
\label{Bn}
B_{n,k}(a^{\bullet }b x_{\bullet })=a^{n} b^{k} B_{n,k}(x_{\bullet }).
\end{equation}
The falling factorials are defined as follows:
\begin{equation*}
[x]_{n \downarrow }=x(x-1)\cdots (x-n+1)=
\frac{\Gamma (x+1)}{\Gamma (x+1-n)}.
\end{equation*}
Let us denote the Stirling numbers\index{Stirling numbers} of the \textbf{first} kind by
$|s(n,k)|$ and $ s(n,k)$, respectively, -- unsigned
\begin{equation*}
|s(n,k)|:= B_{n,k}((\bullet -1)!)=B_{n,k}(0!,1!,2!,\ldots.),
\end{equation*}
and signed $s(n,k)$ depending on the parity of $n-k$ given by
\begin{equation*}
s(n,k)=(-1)^{n-k}|s(n,k)|.
\end{equation*}
The Stirling numbers\index{Stirling numbers} of the first kind transform the factorials into powers,
%
\begin{equation}
\label{sn}
[x]_{n \uparrow }=\sum ^{n}_{k=0}|s(n,k)|x^{k} ,\quad [x]_{n \downarrow
}=\sum ^{n}_{k=0}s(n,k)x^{k} .
\end{equation}
Thus, having these definitions and relations, the following useful lemma
is formulated and proved.
%
\begin{lemma}
The m-fold convolution of the infinitely divisible logarithmic series\index{infinitely divisible logarithmic series}
distribution (\ref{LL}) can be equivalently expressed in the following
forms:
%
\begin{equation}
\label{Lm}
P(L(m)=n )=\left (\frac{\alpha }{A}\right )^{m}
\frac{\alpha ^{n} }{n!}|s(m+n,m)| \frac{m! n!}{(m+n)!},
\quad n=0,1,2,\ldots,
\end{equation}
or 
%
\begin{equation}
\label{Lmn}
P(L(m)=n )=\left (\frac{\alpha }{A}\right )^{m}
\frac{\alpha ^{n} }{n!} \sum ^{m\wedge n}_{k=0}\frac{m!}{(m-k)!} B_{n,k}(c
_{\bullet }),\quad B_{0,0}=1,
\end{equation}
where
\begin{equation*}
c_{\bullet }=\left (\frac{ k!}{k+1}, k=1,2,\ldots\right ),\quad B_{n,0}=B
_{0,k}=0,\ n>0,\ k>0.
\end{equation*}
\end{lemma}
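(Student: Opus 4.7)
The plan is to derive both formulas directly from the identity $F(m,s)=F(1,s)^m$, which holds because of the infinite divisibility of $L(1)$. The two forms simply correspond to the two representations of $F(1,s)$ highlighted earlier: the compact one $F(1,s)=(-\log(1-\alpha s))/(As)$, which yields (\ref{Lm}), and the decomposed one $F(1,s)=(\alpha/A)(1+G(s))$, which yields (\ref{Lmn}).

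For the first form I would start from
\[
F(m,s)=\frac{1}{(As)^m}\bigl(-\log(1-\alpha s)\bigr)^m,
\]
and expand $(-\log(1-\alpha s))^m$ using (\ref{B}). Since $-\log(1-\alpha s)=\sum_{k\geq 1}(k-1)!(\alpha s)^k/k!$ is an exponential generating function for the sequence $\alpha^k(k-1)!$, the formula (\ref{Bn}) with $a=\alpha$, $b=1$, $x_\bullet=((\bullet-1)!)$ together with the definition $|s(n,k)|=B_{n,k}(0!,1!,2!,\ldots)$ gives
\[
\frac{(-\log(1-\alpha s))^m}{m!}=\sum_{N\geq m}|s(N,m)|\,\alpha^N\frac{s^N}{N!}.
\]
Substituting this back, pulling out $s^m$ from $(As)^{-m}$, and reindexing $N=m+n$ produces exactly (\ref{Lm}) after reading off the coefficient of $s^n$.

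For the second form I would apply the binomial theorem to $(1+G(s))^m$, which is a finite sum since $G(0)=0$ cuts the series at $k\leq m$ in terms of nontrivial $s$-contributions. Using (\ref{B}) to expand $G(s)^k/k!$ in terms of the Bell polynomials $B_{n,k}(g_\bullet)$ and then (\ref{Bn}) with $g_k=\alpha^k k!/(k+1)=\alpha^k c_k$ to factor out $\alpha^n$, I get
\[
\Bigl(\tfrac{\alpha}{A}(1+G(s))\Bigr)^m
=\Bigl(\tfrac{\alpha}{A}\Bigr)^m\sum_{n\geq 0}\frac{\alpha^n s^n}{n!}\sum_{k=0}^{m\wedge n}\frac{m!}{(m-k)!}B_{n,k}(c_\bullet),
\]
where the truncation $k\leq m\wedge n$ comes from $\binom{m}{k}k!=m!/(m-k)!$ vanishing for $k>m$ and $B_{n,k}=0$ for $k>n$. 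Matching coefficients of $s^n$ gives (\ref{Lmn}).

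The only real obstacle is bookkeeping: making sure the reindexing in the first step identifies the correct Stirling number $|s(m+n,m)|$ and that the factor $m!n!/(m+n)!$ survives after dividing by $(As)^m$, and in the second step verifying that the dual limits in $k$ collapse to $m\wedge n$. Once (\ref{B}) and (\ref{Bn}) are in place, both computations reduce to extracting a coefficient from a power series and the two forms are therefore two ways of re-summing the same Taylor expansion of $F(m,s)$.
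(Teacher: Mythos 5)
Your proposal is correct and follows essentially the same route as the paper: expanding $(-\log(1-\alpha s))^m$ via the Bell-polynomial identity (\ref{B}) and (\ref{Bn}) to recognize $|s(m+n,m)|$ for (\ref{Lm}), and applying the binomial expansion of $(1+G(s))^m$ together with (\ref{B}) for (\ref{Lmn}), then reading off coefficients of $s^n$. The only minor quibble is that the finiteness of the binomial sum is due to $m$ being a positive integer (as you note later via $m!/(m-k)!$ vanishing for $k>m$), not to $G(0)=0$.
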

\begin{proof}
Expanding only the logarithmic function $(-\log (1-\alpha s)) $ we obtain
\begin{equation*}
-\log (1-\alpha s)=\sum ^{\infty }_{k=1}\frac{(\alpha s)^{k}}{k}=
\sum ^{\infty }_{k=1}\frac{(k-1)!(\alpha s)^{k}}{k!}.
\end{equation*}
Then, for the powers we have
\begin{equation*}
\frac{1}{m!}\left (-\log (1-\alpha s)\right )^{m}=\sum ^{\infty }_{k=m}
B_{k,m}( \alpha ^{\bullet }(\bullet -1)!) \frac{s^{k}}{k!}
\end{equation*}
and
\begin{equation*}
\frac{1}{m!}\left (\frac{-\log (1-\alpha s)}{A s}\right )^{m}=\frac{1}{A
^{m}}\sum ^{\infty }_{k=m} \alpha ^{k} |s(k,m)| \frac{s^{k-m}}{k!}.
\end{equation*}
The change of variable $ n=k-m$ leads to
\begin{equation*}
F(m,s)=\left (\frac{\alpha }{A}\right )^{m} \sum ^{\infty }_{n=0}\frac{
\alpha ^{n} s^{n}}{n!}|s(m+n,m)| \frac{m!n!}{(m+n)!}.
\end{equation*}
In the Taylor theorem on the binomial expansion of $ F(m,s)$, there are
only finite numbers of terms:
\begin{equation*}
F(m,s)=\left (\frac{\alpha }{A}\right )^{m} \left [1+G(s)\right ]^{m}=
\left (\frac{\alpha }{A}\right )^{m} \sum ^{m}_{k=0} \frac{m!}{(m-k)!}
\frac{[G(s)]^{k}}{k!}.
\end{equation*}
After replacing $ \frac{[G(s)]^{k}}{k!}$ by the expansion (\ref{B}) we
change the summation order. Then the obtained result is
\begin{equation*}
F(m,s)=\left (\frac{\alpha }{A}\right )^{m} \sum ^{\infty }_{n=0}\frac{
\alpha ^{n} s^{n}}{n!} \sum ^{m\wedge n}_{k=0}\frac{m!}{(m-k)!} B_{n,k}(c
_{^{\bullet }}),
\end{equation*}
where $m\wedge n=\min \{m,n\} $. The p.m.f. $P(L(m)=n )$ is given by the
sequence of coefficients in front of $s^{n}$ in the p.g.f. $ F(m,s)$.
\end{proof}
The comparison of two expressions (\ref{Lm}) and (\ref{Lmn}) leads to
the following combinatorics identity:\index{combinatorics identity}
\begin{equation*}
\sum ^{m\wedge n}_{k=1}\frac{1}{(m-k)!} B_{n,k}(c_{\bullet })=|s(m+n,m)|
\frac{n!}{(m+n)!}.
\end{equation*}
%
\begin{remark}
{The harmonic numbers take part in the 
expansion of the hypergeometric functions. A complete review on summation formulas involving generalized harmonic numbers and Stirling numbers\index{Stirling numbers} is given in
\cite{Sriv}. The generalized harmonic numbers are defined as follows:
\begin{equation*} H^{(k)}_{n}:=1+\frac{1}{2^{k}}+\frac{1}{3^{k}}+\cdots+\frac{1}{n^{k}},\quad H^{(1)}_{n}=H_{n}. \end{equation*}
For $ m=2,3,4,\ldots$, we use the following relations between Stirling numbers\index{Stirling numbers} of the first kind and generalized harmonic numbers to calculate directly the convolution probability of $L(m)$ and to confirm the previous combinatorics identity.\index{combinatorics identity} For example,
\begin{equation}\label{H}
|s(2+n,2)|=(n+1)!\left (1+\frac{1}{2}+\frac{1}{3}+\cdots+\frac{1}{n+1}\right )
\end{equation}
and
\begin{equation*} |s(n,3)|=\frac{(n-1)!}{2}\{(H_{n-1})^{2} -H^{(2)}_{n-1} \},\end{equation*}
and
\begin{equation*} |s(n,4)|=\frac{(n-1)!}{3!}\{(H_{n-1})^{3} -3 H_{n-1} H^{(2)}_{n-1}+2H^{(3)}_{n-1}\}.\end{equation*}
The general recurrence formula on this relation is given in \cite{Sriv}.
}
\end{remark}

\section{Transition probability\index{transition probability} and the Bernstein function\index{Bernstein function} of the Logarithmic L\'{e}vy process\index{L\'{e}vy process}}%
\label{sec3}
The principal information on the behaviour of any L\'{e}vy process\index{L\'{e}vy process} is
given by the representative random variable and it is expressed by the
canonical representation of the Bernstein function\index{Bernstein function} and the L\'{e}vy measure,\index{L\'{e}vy measure}
\cite{Sato,Sch}. The Laplace transform of the process $L(t)$ is given
by
\begin{equation*}
E[e^{-\lambda L(t)}]=\exp (-t\psi _{L}(\lambda )),\quad \lambda \geq 0,
\end{equation*}
where the Laplace exponent is a Bernstein function\index{Bernstein function} defined by the random
variable $ L(1)$ as follows:
\begin{equation*}
\psi _{L}(\lambda )=-\log \left (E[e^{-\lambda L(1)}]\right ),\quad
\lambda \geq 0.
\end{equation*}
For integer-valued L\'{e}vy processes\index{L\'{e}vy process} it is also convenient to work with
probability generating functions, see \cite{Ste}, Chapter 2.

Let us denote the L\'{e}vy measure\index{L\'{e}vy measure} of the process $L(t)$ by
$ \Pi _{L}(n), n=1, 2,\ldots $, its total mass by $\theta _{L}=\sum ^{\infty
}_{n=1}\Pi _{L}(n) $, and its generating function by
\begin{equation*}
Q_{L}(s)=\sum ^{\infty }_{n=1}s^{n} \Pi _{L}(n),\quad |s | \leq 1.
\end{equation*}
The normalised L\'{e}vy measure\index{L\'{e}vy measure} is denoted by $ \widetilde{\Pi }_{L}(n)$
and respectively, its p.g.f. as $\widetilde{Q}_{L}(s)= Q_{L}(s)/\theta
_{L}$. Then in these notations, the p.g.f. $F_{L}(t,s)=E[s^{L(t)}] $ is
given by
\begin{equation*}
F_{L}(t,s)=\exp \{-t \theta _{L} [1-\widetilde{Q}_{L}(s)] \}=\exp \{-t
\theta _{L} + t Q_{L}(s) \}, \quad |s| \leq 1,
\end{equation*}
and the Bernstein function\index{Bernstein function} is in the form
\begin{equation*}
\psi _{L}(\lambda )=\sum ^{\infty }_{k=1}\left (1-e^{-\lambda k}\right )
\Pi _{L}(k),\quad \lambda \geq 0.
\end{equation*}
All these characteristics of the Logarithmic L\'{e}vy process
$L(t)$\index{L\'{e}vy process} are specified in the following lemma.
%
\begin{lemma}
The L\'{e}vy measure\index{L\'{e}vy measure} of the process $L(t)$ generated by the infinitely
divisible logarithmic series\index{infinitely divisible logarithmic series} distribution $L(1)$ (\ref{LL}) is given for
$ n=1, 2,\ldots$ by the partial Bell polynomials\index{partial Bell polynomials} as follows:
%
\begin{equation}
\label{LM}
\Pi _{L}( n) = \frac{\alpha ^{n}}{n!}\sum ^{n}_{k=1} (-1)^{k-1}(k-1)! B
_{n,k}(c_{\bullet }),\quad c_{k}=\frac{ k!}{k+1}.
\end{equation}
The generating function of the L\'{e}vy measure\index{L\'{e}vy measure} is
\begin{equation*}
Q_{L}(s)=\log (1+G(s)), \quad \mbox{}_{2}F_{1}(1,1;2;\alpha s)=1+G(s),\quad |s|
\leq 1.
\end{equation*}
The Bernstein function\index{Bernstein function} of the Logarithmic L\'{e}vy process $L(t)$\index{L\'{e}vy process} is
\begin{equation*}
\psi _{L} (\lambda )=\theta _{L}\left \{  1 - \frac{1}{\theta _{L}}\log (1+G(e
^{-\lambda }) )\right \}  ,\quad \lambda \geq 0,
\end{equation*}
where
\begin{equation*}
\theta _{L} =\psi _{L} (\infty )= -\log \left (\frac{\alpha }{A}\right ).
\end{equation*}
\end{lemma}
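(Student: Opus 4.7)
The plan is to exploit the compound-Poisson structure of integer-valued L\'evy processes (Feller's theorem, already invoked in the paper) and then read off the L\'evy measure and Bernstein function from the p.g.f.\ $F(1,s)$ by taking a logarithm and comparing with the canonical form.

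First I would write $F_{L}(1,s)=\exp\{-\theta_{L}+Q_{L}(s)\}$, which holds since $L(t)$ is a compound Poisson (integer-valued infinitely divisible) process; this is the canonical representation restated just above the lemma. Taking the logarithm gives
\begin{equation*}
-\theta_{L}+Q_{L}(s)=\log F(1,s)=\log\!\frac{\alpha}{A}+\log\bigl(1+G(s)\bigr),
\end{equation*}
using the factorisation (\ref{G}). Since $Q_{L}$ is the p.g.f.\ of a measure supported on $\{1,2,\ldots\}$ we have $Q_{L}(0)=0$, and $G(0)=0$, so evaluating at $s=0$ forces
\begin{equation*}
\theta_{L}=-\log(\alpha/A),\qquad Q_{L}(s)=\log\bigl(1+G(s)\bigr).
\end{equation*}
This immediately yields the stated form of $\psi_{L}(\lambda)=\theta_{L}-Q_{L}(e^{-\lambda})$, and the identification $\theta_{L}=\psi_{L}(\infty)$ follows because $Q_{L}(e^{-\lambda})\to Q_{L}(0)=0$ as $\lambda\to\infty$.

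Next I would extract $\Pi_{L}(n)$ by expanding $\log(1+G(s))=\sum_{k\ge 1}(-1)^{k-1}[G(s)]^{k}/k$, valid for $|s|\le 1$ since $|G(s)|<1$ in a neighbourhood of $0$ and both sides are analytic on $|s|<1$. I then invoke the Bell-polynomial identity (\ref{B}) applied to the exponential generating function (\ref{GE}) with coefficients $g_{k}=\alpha^{k}k!/(k+1)=\alpha^{k}c_{k}$, and the homogeneity (\ref{Bn}), which give $B_{n,k}(g_{\bullet})=\alpha^{n}B_{n,k}(c_{\bullet})$. Substituting and exchanging the order of summation yields
\begin{equation*}
Q_{L}(s)=\sum_{n=1}^{\infty}\frac{\alpha^{n}s^{n}}{n!}\sum_{k=1}^{n}(-1)^{k-1}(k-1)!\,B_{n,k}(c_{\bullet}),
\end{equation*}
and matching coefficients with $Q_{L}(s)=\sum_{n\ge 1}\Pi_{L}(n)s^{n}$ gives formula (\ref{LM}).

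The only technical point that needs some care is the interchange of the two infinite sums and the passage from the formal logarithmic series to the coefficient identity; this is the main obstacle, but it is harmless because for $s$ in a small disc around $0$ one has $|G(s)|<1$, so the double series converges absolutely, while the resulting identity between analytic functions on $|s|<1$ then extends by analytic continuation. Everything else is direct bookkeeping with the formulas already established in Section~\ref{sec2}.
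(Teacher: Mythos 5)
Your proposal is correct and follows essentially the same route as the paper: take the logarithm of $F(1,s)$ using the factorisation (\ref{G}) to identify $\theta_{L}$ and $Q_{L}(s)=\log(1+G(s))$, then expand the logarithm, apply the Bell-polynomial formula (\ref{B}) together with the homogeneity (\ref{Bn}), and read off $\Pi_{L}(n)$ from the coefficient of $s^{n}$. The only difference is that you make the convergence and summation-interchange justification explicit, which the paper leaves implicit.
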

\begin{proof}
Following representation (\ref{G}) of p.g.f. $ F(1,s)$, it is enough to
write
\begin{equation*}
\log (F(1,s))=\log \left (\frac{\alpha }{A} [1+G(s)]\right )= \log
\left (\frac{\alpha }{A}\right )+\log \left (1+G(s)\right )
\end{equation*}
in order to get the generating function of the L\'{e}vy measure.\index{L\'{e}vy measure} The total
mass of the L\'{e}vy measure\index{L\'{e}vy measure}
$\theta _{L}=-\log \left (\frac{\alpha }{A}\right ) $ because
$ G(0)=0$.

The logarithmic function $\log (1+x)$ is expanding by the signed
Stirling numbers\index{signed Stirling numbers} of the first kind and the expansion of $G(s)$ is given
previously in (\ref{B}). Then
%
\begin{equation}
\label{LG}
\log (1+G(s))=\sum ^{\infty }_{k=1}(-1)^{k-1}\frac{(G(s))^{k}}{k}=
\sum ^{\infty }_{k=1}(-1)^{k-1}(k-1)!\frac{(G(s))^{k}}{k!}.
\end{equation}
Exchanging the order of summation and in view of $B_{n,k}=0$, $k\geq n+1
$, we write
\begin{equation*}
\sum ^{\infty }_{k=1}(-1)^{k-1}(k-1)!\sum ^{\infty }_{n=k}B_{n,k}(g_{
\bullet })\frac{s^{n}}{n!}=\sum ^{\infty }_{n=1}\sum ^{n}_{k=1}(-1)^{k-1}(k-1)!B
_{n,k}(c_{\bullet }) \frac{\alpha ^{n} s^{n}}{n!}.
\end{equation*}
The L\'{e}vy measure\index{L\'{e}vy measure} is given by the sequence of coefficients in front
of $s^{n}$ in $ Q_{L}(s)$.
\end{proof}
As a direct result of (\ref{LM}), the computations of several terms of
the L\'{e}vy measure\index{L\'{e}vy measure} are simplified, such as
\begin{gather*}
\Pi _{L}(1)=\frac{\alpha }{2}, \quad \Pi _{L}(2)=\frac{\alpha ^{2}}{2!}
\frac{5}{12},\quad  \Pi _{L}(3)=\frac{\alpha ^{3}}{3!}\frac{3}{4},
\\
 \Pi _{L}(4)=\frac{
\alpha ^{4}}{4!}\frac{251}{120}, \quad \Pi _{L}(5)=\frac{\alpha ^{5}}{5!}
\frac{95}{12}.
\end{gather*}

It is well known from the \cite{Ste} (see Theorem 4.4, Chapter 2),
that the p.m.f. $P(L(1)=n)$, $n=0, 1,\ldots$, (\ref{LL}) is related to the
sequence of the (canonical) L\'{e}vy measure $ \Pi _{L}(n), n=1, 2,\ldots$,
(\ref{LM}) by the following recurrence equation:
\begin{equation*}
(n+1) P(L(1)=n+1)=\sum ^{n}_{k=0} P(L(1)=k)(n-k+1)\Pi _{L}(n-k+1).
\end{equation*}
It is equivalent to the next combinatorical identity:\index{combinatorical identity}
\begin{equation*}
\frac{n+1}{n+2}=\sum ^{n}_{k=0} \frac{1}{(k+1)(n-k)!}\sum ^{n-k+1}_{j=1}
(-1)^{j-1}(j-1)! B_{n-k+1,j}(c_{\bullet }).
\end{equation*}

There are two ways to define the transition probability\index{transition probability} $P(L(t)=n)$, $n=0,
1,\ldots$, of the L\'{e}vy process\index{L\'{e}vy process} $ L(t)$. We could proceed either by
starting with p.g.f. $ F_{L}(t,s)$ and its Taylor expansion or by using
the L\'{e}vy measure\index{L\'{e}vy measure} to define the compound Poisson process $L(t)$. We present
these methods separately in two independent proofs.
%
\begin{thm}\label{thm1}
Let $ \{L(t), t\geq 0 \}$ be a L\'{e}vy process\index{L\'{e}vy process} generated by the
infinitely divisible logarithmic series\index{infinitely divisible logarithmic series} distribution (\ref{LL})
supported by $ \{0, 1, 2,\ldots\}$. Then its transition probability\index{transition probability} is given for
$n=0, 1, 2,\ldots$ by
%
\begin{equation}
\label{P}
P(L(t)=n)= \left (\frac{\alpha }{A}\right )^{t} \frac{\alpha ^{n}}{n!}
\sum ^{n}_{k=0} [t]_{k \downarrow } B_{n,k}(c_{\bullet }),
\end{equation}
or equivalently:
%
\begin{equation}
\label{PP}
P(L(t)=n)= \left (\frac{\alpha }{A}\right )^{t} \frac{\alpha ^{n}}{n!}
\sum ^{n}_{k=0} t^{k} B_{n,k}(y_{\bullet }),\quad B_{0,0}=1,
\end{equation}
where
\begin{equation*}
y_{n}= \sum ^{n}_{k=1} (-1)^{k-1}(k-1)! B_{n,k}(c_{\bullet }) , \quad
c_{k}=\frac{k!}{k+1}.
\end{equation*}
\end{thm}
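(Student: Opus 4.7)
The plan is to prove the two formulas (\ref{P}) and (\ref{PP}) by two independent routes, each exploiting a different factorisation of $F_L(t,s)=[F(1,s)]^{t}=(\alpha/A)^{t}\bigl(1+G(s)\bigr)^{t}$ already available from (\ref{G}) and Lemma~2, and then reading off the coefficient of $s^{n}$.

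For formula (\ref{P}), I would expand the $t$-th power by the generalised binomial theorem,
\begin{equation*}
\bigl(1+G(s)\bigr)^{t}=\sum_{k=0}^{\infty}\binom{t}{k}G(s)^{k}=\sum_{k=0}^{\infty}\frac{[t]_{k\downarrow}}{k!}G(s)^{k},
\end{equation*}
valid since $G(0)=0$ and $|G(s)|<1$ on a neighbourhood of $0$, so that all manipulations are performed inside the radius of convergence of the power series. Then I would insert the Bell polynomial expansion (\ref{B}) of $G(s)^{k}/k!$ using the exponential coefficients $g_{k}=\alpha^{k}k!/(k+1)=\alpha^{k}c_{k}$, apply the homogeneity rule (\ref{Bn}) to pull out $\alpha^{n}$, and interchange the two summations. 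Since $B_{n,k}(c_{\bullet})=0$ for $k>n$, the inner sum becomes finite and extracting the coefficient of $s^{n}$ yields (\ref{P}).

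For formula (\ref{PP}) I would use Lemma~2 in the form $F_L(t,s)=(\alpha/A)^{t}\exp\{tQ_L(s)\}$ together with the representation $Q_L(s)=\sum_{n\ge1} y_{n}(\alpha s)^{n}/n!$ read off from (\ref{LM}). Expanding the exponential as $\sum_{k\ge 0}t^{k}Q_L(s)^{k}/k!$ and applying (\ref{B}) to $Q_L(s)^{k}/k!$ with coefficients $\alpha^{\bullet}y_{\bullet}$, the scaling identity (\ref{Bn}) again produces a factor $\alpha^{n}$, and an interchange of summations gives (\ref{PP}). The two proofs together also imply the combinatorial identity $\sum_{k=0}^{n}[t]_{k\downarrow}B_{n,k}(c_{\bullet})=\sum_{k=0}^{n}t^{k}B_{n,k}(y_{\bullet})$, which is the polynomial-in-$t$ shadow of the elementary equality $(1+G(s))^{t}=\exp\{t\log(1+G(s))\}$.

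The genuinely routine parts are the two switches of summation order, justified by absolute convergence inside the disc $|s|<1/\alpha$; the main subtlety is handling the non-integer exponent $t\ge 0$. For (\ref{P}) this is taken care of by the generalised binomial series with the falling-factorial coefficients $[t]_{k\downarrow}/k!$, and for (\ref{PP}) by the convergent exponential series. Both identities initially hold as formal power series in $s$ (equivalently on a small disc), and since both sides are analytic on $|s|<1$ they extend to $|s|\le 1$ by continuity, so the coefficients give the transition probabilities.
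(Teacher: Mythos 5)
Your proposal is correct and follows essentially the same path as the paper: your first argument is exactly the paper's Proof 1 (generalised binomial expansion of $(1+G(s))^{t}$ with falling factorials, the Bell-polynomial expansion (\ref{B}), the scaling rule (\ref{Bn}), and an interchange of sums), while your second argument, expanding $(\alpha/A)^{t}\exp\{tQ_{L}(s)\}$ term by term, is just the generating-function form of the paper's Proof 2, where the same expansion appears probabilistically as conditioning on the Poisson number of jumps with $P(\xi_{1}+\cdots+\xi_{k}=n)=B_{n,k}(x_{\bullet})k!/n!$. The only small inaccuracy is the claim of absolute convergence on all of $|s|<1/\alpha$ for the binomial route (one needs $G(|s|)<1$, so only a smaller disc is available), but coefficient identification on any neighbourhood of $0$ suffices, so this does not affect the proof.
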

\begin{proof}[Proof 1]
 The transition probability $P(L(t)=n)$\index{transition probability} is the coefficient
in front of $ s^{n}$ in the expansion of p.g.f. $F_{L}(t,s)=\left (\frac{
\alpha }{A}\right )^{t} (1+G(s))^{t} $. The Taylor theorem for the
binomial expansion following (\ref{B}) leads to
\begin{equation*}
F_{L}(t,s)=\left (\frac{\alpha }{A}\right )^{t}
\sum ^{\infty }_{k=0}[t]_{k \downarrow } \sum ^{\infty }_{n=k} B_{n,k}(g
_{\bullet }) \frac{s^{n}}{n!}.
\end{equation*}
Then after exchanging the order of summation we find
\begin{equation*}
F_{L}(t,s)=\left (\frac{\alpha }{A}\right )^{t} \sum ^{\infty }_{n=0} \frac{
\alpha ^{n} s^{n}}{n!} \sum ^{n}_{k=0} [t]_{k \downarrow } B_{n,k}(c
_{\bullet }).
\end{equation*}
Because the partial Bell polynomials $ B_{0,0}=1$\index{partial Bell polynomials} and $B_{0,k}=0$, $k=1,2,\ldots $, the following result is valid:
\begin{equation*}
F_{L}(t,s)=\left (\frac{\alpha }{A}\right )^{t}\left ( 1+ \sum ^{
\infty }_{n=1} \frac{\alpha ^{n} s^{n}}{n!} \sum ^{n}_{k=1} [t]_{k
\downarrow } B_{n,k}(c_{\bullet })\right ),\quad c_{k}=\frac{k!}{k+1}.\qedhere
\end{equation*}
\end{proof}
In particular, it is easy to calculate several terms of the transition
probability,\index{transition probability} directly from (\ref{P}):
\begin{gather*}
P(L(t)=1)=\left (\frac{\alpha }{A}\right )^{t} \frac{\alpha t}{2},
\quad P(L(t)=2)=\left (\frac{\alpha }{A}\right )^{t}
\frac{\alpha ^{2}}{2!}\left (\frac{2t}{3}+\frac{t(t-1)}{4}\right ),
\\
P(L(t)=3)=\left (\frac{\alpha }{A}\right )^{t} \frac{\alpha ^{3}}{3!}
\left \{  \frac{3! t}{4}+\frac{t(t-1)3
.2!}{2.3}+\frac{t(t-1)(t-2)}{8}\right \}  ,
\\
P(L(t)=4)=\left (\frac{\alpha }{A}\right )^{t} \frac{\alpha ^{4}}{4!}
\left \{  \frac{4! t}{5}+\frac{13}{3}[t]_{2 \downarrow }+[t]_{3 \downarrow
}+ \frac{1}{16}[t]_{4 \downarrow }\right \}  .
\end{gather*}\newpage
\begin{proof}[Proof 2]
 Let the positive random variable $\xi $ be defined by the
normalised L\'{e}vy measure\index{L\'{e}vy measure} (\ref{LM}), having p.m.f.
\begin{equation*}
P(\xi =n)= \Pi _{L}(n)/\theta _{L},\quad n=1,2,\ldots,\quad \theta _{L}=
\log \Bigl(\frac{A}{\alpha }\Bigr),
\end{equation*}
and p.g.f. $E[s^{\xi }]= Q_{L}(s)/ \theta _{L}$. Let $( \xi _{1}, \xi
_{2}, \ldots , \xi _{k}, k=1,2,\ldots)$ be independent copies of the random
variable $\xi $. Following definition of the compound Poisson process,
the transition probability\index{transition probability} is represented as follows:
\begin{equation*}
P(L(t)=n)=\sum ^{\infty }_{k=0}e^{-\theta t}\frac{(\theta t)^{k}}{k!}P(
\xi _{1}+\xi _{2}+\cdots+ \xi _{k}=n ),\quad \theta =\log \Bigl(\frac{A}{\alpha
}\Bigr).
\end{equation*}
Taking into account (\ref{LG}) and (\ref{Bn}) we can represent the
function
\begin{equation*}
\frac{1}{\theta } Q_{L}(s)=\frac{1}{\theta }\log (1+G(s))
\end{equation*}
as an exponential generating function $\frac{1}{\theta } Q_{L}(s)=
\sum ^{\infty }_{n=1}\frac{x_{n} s^{n}}{n!}$,\index{exponential generating function} where
\begin{equation*}
x_{n}= \frac{1}{\theta }\alpha ^{n}\sum ^{n}_{k=1} (-1)^{k-1}(k-1)! B
_{n,k}(c_{\bullet }).
\end{equation*}
It means that the normalised probability convolution distribution\index{normalised probability convolution distribution}
\begin{equation*}
P(\xi _{1}+\xi _{2}+\cdots +\xi _{k}=n )= B_{n,k}(x_{\bullet })
\frac{k!}{n!}.
\end{equation*}
Then
\begin{equation*}
P(L(t)=n)=\sum ^{\infty }_{k=0}e^{-\theta t}\frac{(\theta t)^{k}}{k!} B
_{n,k}(x_{\bullet }) \frac{k!}{n!},\quad k\leq n.
\end{equation*}
The infinite sum is reduced to the finite one because $B_{n,k}(x_{
\bullet })=0 $ when $k>n $. We know that $ \theta =-\log (\frac{
\alpha }{A}) $ and $e^{-\theta t}=\left (\frac{\alpha }{A}\right )
^{t} $. Let us denote
\begin{equation*}
y_{n}=\sum ^{n}_{k=1} (-1)^{k-1}(k-1)! B_{n,k}(c_{\bullet }).
\end{equation*}
Then following the formula (\ref{Bn}), we obtain, for $ n=0,1,2,\ldots$,
\begin{equation*}
P(L(t)=n)=\left (\frac{\alpha }{A}\right )^{t}\sum ^{n}_{k=0} t^{k}\frac{
\alpha ^{n}}{n!} B_{n,k}(y_{\bullet }).
\end{equation*}
The probability $P(L(t)=0)=(\frac{\alpha }{A})^{t} $ corresponds to the
$ B_{0,0}(y_{\bullet })=1$.
\end{proof}
We remark that in the matrix representation of partial Bell polynomials\index{partial Bell polynomials}
for composition function the numbers $ B_{n,k}(x_{\bullet }), n\geq k
\geq 1$, are defined as product of matrices, \cite{Pit}, page 19.
Let us denote by $H(s)$ and $G(s)$ respectively the exponential
generating functions\index{exponential generating function} of both sequences $(h_{\bullet })$ and
$(g_{\bullet })$. Likewise, by $(x_{\bullet })$ is denoted the
sequence whose exponential generating function\index{exponential generating function} is the composition
$ H(G(s))$, such as
\begin{equation*}
H(s)=\log (1+s),\quad H(G(s))=\log (1+G(s)).
\end{equation*}
Then the matrix associated with the sequence $(x_{\bullet })$ is the
product of the triangular matrices associated with $(g_{\bullet })$ and
$(h_{\bullet })$ respectively:
\begin{equation*}
B_{n,k}(x_{\bullet })=\sum ^{n}_{j=k} B_{n,j}(g_{\bullet }) B_{j,k}(h
_{\bullet }),\quad k\leq j\leq n.
\end{equation*}
The sequence $(h_{\bullet })$ defined by the function $ H(s)=\log (1+s)$ is
exactly the sequence $ h_{k}=(-1)^{k-1}(k-1)!$ and $B_{n,k}(h_{\bullet
})=s(n,k)$, i.e. signed Stirling numbers\index{signed Stirling numbers} of the first kind. Then, after
applying formulas (\ref{Bn}) and (\ref{sn}) and changing the order of
summation, where $ k\leq j$, we confirm the equivalence of (\ref{P}) and
(\ref{PP}) as follows:
\begin{equation*}
\sum ^{n}_{k=1} t^{k} B_{n,k}(x_{\bullet })= \alpha ^{n} \sum ^{n}_{j=1}B
_{n,j}(c_{\bullet }) \sum ^{j}_{k=1} t^{k} B_{j,k}(h_{\bullet })=\alpha
^{n}\sum ^{n}_{j=1}B_{n,j}(c_{\bullet })[t]_{j \downarrow }.
\end{equation*}

The L\'{e}vy measure\index{L\'{e}vy measure} is the infinitesimal generator of the convolution
semi-group given by the transition probability\index{transition probability} $P(L(t)=n)$, $n=0, 1,
2,\ldots $, see \cite{Berg}, page 172. It is a limit in vague
convergence, see \cite{B}, page 39, as follows:
\begin{equation*}
\Pi _{L}(n) = \lim _{t \downarrow 0}\frac{P_{L}(t,n)}{t},\quad n=1,2,\ldots
.
\end{equation*}
Then
\begin{equation*}
\Pi _{L}(n)=\lim _{t \downarrow 0}\Bigl(\frac{\alpha }{A}\Bigr)^{t} \frac{\alpha
^{n}}{n!}\sum ^{n}_{k=1} \frac{[t]_{k \downarrow }}{t} B_{n,k}(c_{
\bullet }).
\end{equation*}
Finally, we know that $ [t]_{k \downarrow }= [-t]_{ k
\uparrow }(-1)^{k}$. In this way,
\begin{equation*}
\lim _{t \downarrow 0} \frac{[t]_{k \downarrow }}{t}=(-1)^{k-1}(k-1)!.
\end{equation*}

\section{Negative-Binomial process subordinated by the Gamma process}%
\label{sec4}
The concept of subordination\index{subordination} was introduced by S. Bochner in 1955 for
the Markov processes, L\'{e}vy processes,\index{L\'{e}vy process} and corresponding semigroups,
as randomization of the time parameter: $Y(t)=X(T(t))$. There are two
sources of randomness -- the underlying process $X(t)$ and a random time
process $T(t)$, under the assumption of their independence. The
time-change process $T(t)$ is supposed to be a subordinator -- the L\'{e}vy
process\index{L\'{e}vy process} with nonnegative increments, \cite{B}, Chapter 3. The
independence of the ground process\index{ground process} and the random time process ensures
the preservation of Markov property and L\'{e}vy property for the
subordinated process.\index{subordinated process} The transformation of the main probabilistic
characteristics, such as transition probability,\index{transition probability} L\'{e}vy measure\index{L\'{e}vy measure} and
Laplace exponent, is stated and proved in \cite{Sato}, Chapter 6,
Theorem 30.1. See also \cite{Ci}, Chapter 7, Theorem 6.2 and
Theorem 6.18. They are our principal references.

In this paragraph, we study the effect of a random time-change for the
Negative-Binomial process $\{X(t), t\geq 0\}$. The L\'{e}vy measure\index{L\'{e}vy measure} of
a Negative-Binomial process is defined by a logarithmic series\index{logarithmic series} distribution
supported by positive integers $ N=\{1, 2, \ldots\}$ with the same
parameter $0< \alpha < 1$ as for the Logarithmic L\'{e}vy process\index{L\'{e}vy process}
$L(t)$. The Gamma subordinator\index{Gamma subordinator} $ \{T_{\beta }(t), t\geq 0 \}$ with
selective parameter $\beta >0$ can be considered as a random observation
time, where the mathematical expectation $ E[T_{\beta }(t)]=\beta t$.
The obtained results are formulated and proved in the following theorem.
%
\begin{thm}\label{thm2}
Let $ \{X(t), t\geq 0 \}$ be a Negative-Binomial process with the Bernstein
function\index{Bernstein function}
\begin{equation*}
\psi _{X}(\lambda )=\log \left ( \frac{1-\alpha e^{-\lambda }}{1-
\alpha } \right ),\quad \lambda \geq 0,\quad \psi _{X}(\infty )=-
\log (1-\alpha )=A.
\end{equation*}
Let $ \{T_{\beta }(t), t\geq 0 \}$ be a Gamma subordinator\index{Gamma subordinator} with
the Bernstein function\index{Bernstein function}
\begin{equation*}
\psi _{T}(\lambda )=\log (1+\beta \lambda ),\quad \lambda \geq 0,
\quad \psi _{T}(\infty )=\infty .
\end{equation*}
Suppose the processes $ X(t)$ and $ T_{\beta }(t)$ are independent.

Then for the subordinated process $ Y(t)=X(T_{\beta }(t))$ the
following results are valid.

The Bernstein function\index{Bernstein function} of $Y(t)$ is given by
\begin{equation*}
\psi _{Y}(\lambda )=\log \left (1+\beta \log \left ( \frac{1-\alpha e
^{-\lambda }}{1-\alpha }\right )\right ),\quad \lambda \geq 0,\quad
\psi _{Y}(\infty )=\log (1+A\beta ).
\end{equation*}
The L\'{e}vy measure\index{L\'{e}vy measure} of the subordinated process\index{subordinated process} is given by
\begin{equation*}
\Pi _{Y}(n)= \frac{\alpha ^{n}}{n!}\sum ^{n}_{k=1}|s(n,k)| (k-1)!\left (\frac{
\beta }{1+A\beta }\right )^{k} ,\quad n=1,2,\ldots.
\end{equation*}
The transition probability\index{transition probability} $P(Y(t)=n)$, $n=0,1,2,\ldots$, is given by
\begin{equation*}
P(Y(t)=n)=
\frac{\alpha ^{n}}{n!}\frac{1}{(1+A\beta )^{t}}\sum ^{n}_{k=0}|s(n,k)| [t]_{k
\uparrow } \left (\frac{\beta }{1+A\beta }\right )^{k},
\end{equation*}
or equivalently:
\begin{equation*}
P(Y(t)=n)= \frac{\alpha ^{n}}{n!}\frac{1}{(1+A\beta )^{t}} \sum ^{n}
_{k=0} t^{k} B_{n,k}(w _{\bullet }) ,
\end{equation*}
where the sequence $ (w _{\bullet })$ is defined by
\begin{equation*}
w_{n}= \sum ^{n}_{k=1}|s(n,k)| |s(k,1)|\left (\frac{ \beta }{1+A\beta
}\right )^{k}, \quad |s(k,1)|=(k-1)!.
\end{equation*}
\end{thm}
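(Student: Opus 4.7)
The plan is to apply Bochner subordination systematically: since $Y=X(T_\beta(\cdot))$ with independent ground process and subordinator, the Laplace exponents compose, $\psi_Y=\psi_T\circ\psi_X$, which immediately produces the stated $\psi_Y(\lambda)$ by direct substitution, and the total mass $\psi_Y(\infty)=\log(1+A\beta)=:\theta_Y$ follows by continuity and $\psi_X(\infty)=A$.

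For the Lévy measure I would use the identity $Q_Y(s)=\theta_Y-\psi_Y(-\log s)$, where $Q_Y(s)=\sum_{n\geq 1}s^n\Pi_Y(n)$ is the generating function of $\Pi_Y$. Using $\log((1-\alpha s)/(1-\alpha))=\log(1-\alpha s)+A$, this simplifies to
\[
Q_Y(s)=-\log\bigl(1-V(s)\bigr),\qquad V(s)=\frac{\beta}{1+A\beta}\bigl(-\log(1-\alpha s)\bigr),
\]
so $V$ is an EGF with coefficients $v_j=\alpha^j(j-1)!\,\beta/(1+A\beta)$. Expanding $-\log(1-V)=\sum_{k\geq 1}(k-1)!\,V^k/k!$ and applying (\ref{B}) together with the homogeneity (\ref{Bn}) and the identification $B_{n,k}((\bullet-1)!)=|s(n,k)|$ yields $\Pi_Y(n)$ in the stated closed form.

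For the first form of the transition probability I would start from $F_Y(t,s)=\exp(-t\psi_Y(-\log s))=(1+A\beta)^{-t}(1-V(s))^{-t}$ and expand via the generalized binomial series $(1-x)^{-t}=\sum_{k\geq 0}[t]_{k\uparrow}x^k/k!$. The key input is the identity
\[
\frac{(-\log(1-\alpha s))^k}{k!}=\sum_{n\geq k}\alpha^n|s(n,k)|\frac{s^n}{n!},
\]
already established in the proof of Lemma~1. Substituting, exchanging the order of summation, and reading off the coefficient of $s^n$ produces the first formula verbatim.

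For the equivalent compound-Poisson form I would write $F_Y(t,s)=e^{-t\theta_Y}\exp(tQ_Y(s))$ and treat $Q_Y$ as an EGF: the Lévy-measure calculation already shows $Q_Y(s)=\sum_{n\geq 1}\alpha^n w_n s^n/n!$ with the very $w_n$ defined in the theorem, since $|s(k,1)|=(k-1)!$. Applying the Bell-polynomial expansion (\ref{B}) to $\exp(tQ_Y(s))$ and using the homogeneity (\ref{Bn}) to extract $\alpha^n$ yields the second expression. The principal obstacle is bookkeeping: keeping signed versus unsigned Stirling numbers and the $\alpha$-rescalings of the Bell-polynomial arguments consistent across the three expansions (of $Q_Y$, of $(1-V)^{-t}$, and of $\exp(tQ_Y)$); once the correct EGF at each stage is identified, the Bell-polynomial machinery closes the computation mechanically.
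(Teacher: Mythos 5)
Your proposal is correct, but it reaches the Lévy measure and the first transition-probability formula by a different route than the paper. For those two items the paper works probabilistically with the general subordination formulas of Sato/Cinlar: it inserts the explicit Negative-Binomial transition probability $P(X(u)=n)=(1-\alpha)^{u}[u]_{n\uparrow}\alpha^{n}/n!$, expands the rising factorial with unsigned Stirling numbers via (\ref{sn}), and integrates against the Gamma transition density (for $P(Y(t)=n)$) and against the Gamma Lévy measure $e^{-u/\beta}du/u$ (for $\Pi_{Y}(n)$), the Gamma integrals producing the factors $\Gamma(t+k)\beta^{k}/\Gamma(t)(1+A\beta)^{t+k}$ and $\Gamma(k)(\beta/(1+A\beta))^{k}$. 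You instead stay entirely at the level of generating functions: from $\psi_{Y}=\psi_{T}\circ\psi_{X}$ you get $F_{Y}(t,s)=(1+A\beta)^{-t}(1-V(s))^{-t}$ with $V(s)=\frac{\beta}{1+A\beta}(-\log(1-\alpha s))$, extract $\Pi_{Y}$ from $Q_{Y}(s)=-\log(1-V(s))$ via (\ref{B}), (\ref{Bn}) and $B_{n,k}((\bullet-1)!)=|s(n,k)|$, and get the first formula from the generalized binomial series $(1-x)^{-t}=\sum_{k\geq0}[t]_{k\uparrow}x^{k}/k!$ together with the expansion of $(-\log(1-\alpha s))^{k}/k!$ from Lemma~1; all steps check out (note $V(1)=A\beta/(1+A\beta)<1$, and coefficient extraction is legitimate as formal power series in any case). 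Your derivation of the equivalent Bell-polynomial form of $P(Y(t)=n)$ from $\exp(tQ_{Y}(s))$ coincides with the paper's compound-Poisson argument. What each approach buys: yours needs neither the explicit law of $X(u)$ nor any integration, deriving everything mechanically from the Bernstein function (implicitly using that $\psi_{Y}(\infty)=\log(1+A\beta)<\infty$, so $Y$ is compound Poisson and $Q_{Y}(s)=\theta_{Y}-\psi_{Y}(-\log s)$ applies); the paper's integral route exhibits the general structure of subordination and hands you the Stirling-number form directly from the Gamma integrals, at the cost of invoking the known Negative-Binomial transition probability.
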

\begin{proof}
The main assumption in the definition of subordination\index{subordination} by Bochner is the
independence of the ground process\index{ground process} and the random time-change process.
The methods of the Laplace transform and conditional probability for
independent processes give the following convenient representations of
the main characteristics, see \cite{Sato}, page 197. The Bernstein
function\index{Bernstein function} of the subordinated process\index{subordinated process} is the composition of the
corresponding Bernstein functions,\index{Bernstein function} as follows:
\begin{equation*}
\psi _{Y}(\lambda )=\psi _{T_{\beta }}(\psi _{X}(\lambda )).
\end{equation*}
The transition probability\index{transition probability} of the subordinated process\index{subordinated process} is expressed by
the conditional probability and is given as the integral of transition
probability\index{transition probability} of the ground process\index{ground process} by the transition probability of the Gamma
subordinator:\index{Gamma subordinator} \index{transition probability}
\begin{align*}
P(Y(t)=n)&=\int ^{\infty }_{0+} P(X(u)=n)u^{t-1} e^{- u/\beta } \frac{du}{
\beta ^{t} \Gamma (t) }
\\
&= \int ^{\infty }_{0+} (1-\alpha )^{u}[u]_{n \uparrow } \frac{\alpha
^{n}}{n!}u^{t-1} e^{- u/\beta } \frac{du}{\beta ^{t} \Gamma (t)}.
\end{align*}
Replacing the increasing factorials (\ref{sn}) $[u]_{n \uparrow }=
\sum ^{n}_{k=0} |s(n,k)|u^{k}$ and
\begin{equation*}
(1-\alpha )^{u}=e^{u\log (1-\alpha )}=e^{-Au},
\end{equation*}
we obtain
\begin{align*}
P(Y(t)=n)&= \frac{\alpha ^{n}}{n!}\sum ^{n}_{k=0} |s(n,k)| \int ^{\infty
}_{0+} e^{-Au} e^{- u/\beta } u^{k+t-1}
\frac{du}{\beta ^{t} \Gamma (t)}
\\
&= \frac{\alpha ^{n}}{n!}\sum ^{n}_{k=0} |s(n,k)| \frac{\Gamma (t+k)\beta
^{k}}{\Gamma (t)(1+A\beta )^{t+k}}
\\
&=\frac{\alpha ^{n}}{n!}\frac{1}{(1+A\beta )^{t}}\sum ^{n}_{k=0}|s(n,k)|
[t]_{k\uparrow } \left (\frac{\beta }{1+A\beta }\right )^{k}.
\end{align*}
Let us remark, that the L\'{e}vy measure\index{Gamma subordinator} \index{L\'{e}vy measure} of the Gamma subordinator in our
parametrisation is given by
\begin{equation*}
\Pi _{T_{\beta }}(du)=e^{- u/\beta } du/u ,
\end{equation*}
see \cite{B}, page 73. Then, from the results proved in
\cite{Sato,Ci}, the L\'{e}vy measure\index{L\'{e}vy measure} of the subordinated process\index{subordinated process} can be
calculated as the integral of transition probability\index{transition probability} of the ground
process\index{ground process} by the L\'{e}vy measure of the Gamma subordinator:\index{Gamma subordinator} \index{L\'{e}vy measure}
\begin{align*}
\Pi _{Y}(n)&=\int ^{\infty }_{0+} P(X(u)=n) e^{- u/\beta } \frac{du}{u}=
\int ^{\infty }_{0+} (1-\alpha )^{u}[u]_{n \uparrow }
\frac{\alpha ^{n}}{n!} e^{- u/\beta } \frac{du}{u}
\\
&= \frac{\alpha ^{n}}{n!}\sum ^{n}_{k=1} |s(n,k)| \int ^{\infty }_{0+} e
^{-Au} e^{- u/\beta } u^{k} \frac{du}{u}
\\
&= \frac{\alpha ^{n}}{n!}\sum
^{n}_{k=1} |s(n,k)|\Gamma (k) \left (\frac{\beta }{1+A\beta }\right )
^{k}.
\end{align*}
From the Bernstein function $\psi _{Y}(\lambda ) $\index{Bernstein function} we derive the
generating function of the L\'{e}vy measure $\Pi _{Y}(n)$ in the
following form:
\begin{equation*}
Q_{Y}(s)=\theta _{Y} \widetilde{Q}_{Y}(s)=- \log \left (1-\frac{\beta
}{1+A\beta }\{-\log (1-\alpha s)\}\right ).
\end{equation*}
It can be presented as an exponential generating function\index{exponential generating function} as follows:
\begin{equation*}
Q_{Y}(s)=\sum ^{\infty }_{n=1} u_{n} \frac{s^{n}}{n!}, \quad u_{n}=
\sum ^{n}_{k=1}B_{n,k}(v_{\bullet })B_{k,1}(s_{\bullet }),
\end{equation*}
where the sequences $(v_{\bullet })$ and $(s_{\bullet })$ are defined
respectively by
\begin{equation*}
v_{k}=\frac{k! \alpha ^{k}}{k},\quad s_{k}=\frac{k!}{k}\left (\frac{
\beta }{1+a\beta }\right )^{k} .
\end{equation*}
Moreover,
\begin{equation*}
B_{n,k}(v_{\bullet })=\alpha ^{n} |s(n,k)|,\quad B_{k,1}(s_{\bullet })=
\left (\frac{\beta }{1+A\beta }\right )^{k} |s(k,1)|
\end{equation*}
and
\begin{equation*}
u_{n}=\alpha ^{n} \sum ^{n}_{k=1}|s(n,k)| |s(k,1)|\left (\frac{\beta }{1+A
\beta }\right )^{k}.
\end{equation*}
Let
\begin{equation*}
( \xi _{1}, \xi _{2}, \ldots , \xi _{k}, k=1,2,\ldots)
\end{equation*}
be independent copies of the positive random variable $\xi $ with p.m.f.
\begin{equation*}
P(\xi =n)=\Pi _{Y}(n)/\theta ,\quad n=1,2,\ldots,\quad \theta =\theta _{Y}=
\log (1+A\beta ).
\end{equation*}
In a complete analogy with the Proof 2 of Theorem~\ref{thm1}, we find the
normalised probability convolution distribution\index{normalised probability convolution distribution}
\begin{equation*}
P(\xi _{1}+\xi _{2}+\cdots+ \xi _{k}=n )= \frac{1}{\theta ^{k}}B_{n,k}(u_{
\bullet })\frac{k!}{n!}.
\end{equation*}
Then for $ n=0,1,2,\ldots$, we have:
\begin{equation*}
P(Y(t)=n)=\sum ^{\infty }_{k=0}e^{-\theta t}\frac{(\theta t)^{k}}{k!} \frac{1}{
\theta ^{k}} B_{n,k}(u_{\bullet })\frac{k!}{n!},\quad B_{0,0}=1.
\end{equation*}
Obviously, the exponential decay is $e^{-\theta t}=(\frac{1}{1+A
\beta })^{t} $. Additionally, from the formula (\ref{Bn}) we derived
that
\begin{equation*}
B_{n,k}(u_{\bullet })= \alpha ^{n} B_{n,k}(w _{\bullet }),\quad w_{n}=
\sum ^{n}_{k=1}|s(n,k)| |s(k,1)|\left (\frac{\beta }{1+a\beta }\right )
^{k} .
\end{equation*}
So, taking into account, that $B_{n,k}(u _{\bullet })=0 $ for all
$ k>n$, we see that the infinite sum is reduced to the finite one
\begin{equation*}
P(Y(t)=n)= \frac{\alpha ^{n}}{n!}\frac{1}{(1+A\beta )^{t}} \sum ^{n}
_{k=0} t^{k} B_{n,k}(w _{\bullet }),\quad B_{0,0}=1.\qedhere
\end{equation*}
\end{proof}
Finally, we remark that for $n=1,2,\ldots$,
\begin{equation*}
\lim _{t \downarrow 0}\frac{ P(Y(t)=n)}{t}=\frac{\alpha ^{n}}{n!}B_{n,1}(w
_{\bullet })= \frac{\alpha ^{n}}{n!} w_{n}=\Pi _{Y}(n).
\end{equation*}

\section{Logarithmic L\'{e}vy process subordinated by the Poisson process}%
\label{sec5}
The next studied process $\{Z(t), t\geq 0\}$ is constructed as a random
time-change of the Logarithmic L\'{e}vy process $L(t)$\index{L\'{e}vy process} with the Poisson one in
the assumption of their independence. The selective parameter $b>0$ of
the Poisson process $ \{T_{b}(t), t\geq 0 \}$ is introduced, such as
mathematical expectation $E[T_{b}(t)]=bt $. The results are formulated and
proved in the following theorem.
%
\begin{thm}\label{thm3}
Let $ \{L(t), t\geq 0 \}$ be a Logarithmic L\'{e}vy process\index{L\'{e}vy process} with
the Bernstein function\index{Bernstein function}
\begin{equation*}
\psi _{L}(\lambda )=\log \left (\frac{ Ae^{-\lambda }}{-\log (1-\alpha
e^{-\lambda })}\right ),\quad \lambda \geq 0,\quad \psi _{L}(\infty )=
\log \left (\frac{A}{\alpha }\right )>0.
\end{equation*}
Let $ \{T_{b}(t), t\geq 0\}$ be a Poisson subordinator with the Bernstein
function\index{Bernstein function}
\begin{equation*}
\psi _{T}(\lambda )=b(1-e^{-\lambda }),\quad \lambda \geq 0,\quad \psi
_{T}(\infty )=b>0.
\end{equation*}
Suppose the processes $ L(t)$ and $ T_{b} (t)$ are independent.

Then for the subordinated L\'{e}vy process $ Z(t)=L(T_{b}(t))$\index{subordinated L\'{e}vy process}
the following results are valid.

The Bernstein function\index{Bernstein function} of the subordinated process $ Z(t)$ is given by
\begin{equation*}
\psi _{Z}(\lambda )=b \left (1+\frac{\log (1-\alpha e^{-\lambda })}{A e
^{-\lambda }}\right ),\quad \lambda \geq 0,\quad \psi _{Z}(\infty )=b
\left (1-\frac{\alpha }{A}\right )>0.
\end{equation*}
The L\'{e}vy measure\index{L\'{e}vy measure} of $ Z(t)$ is given by
\begin{equation*}
\Pi _{Z}( n) = \frac{b\alpha ^{n+1}}{A(n+1)},\quad n=1,2,\ldots.
\end{equation*}
The transition probability\index{transition probability} of the subordinated process $Z(t)$ is, for
$ n=0,1,\ldots$,
%
\begin{equation}
\label{Z}
P(Z(t)=n)=e^{-\theta t}\frac{\alpha ^{n} }{n!}
\sum ^{n}_{k=0}\left (\frac{ \alpha bt}{A}\right )^{k} B_{n,k}(c_{
\bullet }),\quad  c_{k}=\frac{k!}{k+1}, \quad \theta =b-\frac{\alpha b}{A}.
\end{equation}
\end{thm}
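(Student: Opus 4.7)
The plan is to treat the three claims by the standard subordination machinery (Sato, Theorem 30.1) used already in the proofs of Lemma 2 and Theorem~\ref{thm2}, exploiting the special structure of the Poisson subordinator, whose L\'{e}vy measure is the atomic measure $\Pi_{T_b}(du)=b\,\delta_1(du)$. First I would derive the Bernstein function by composition: $\psi_Z(\lambda)=\psi_{T_b}(\psi_L(\lambda))=b(1-e^{-\psi_L(\lambda)})$. From the stated form of $\psi_L$ one reads off $e^{-\psi_L(\lambda)}=-\log(1-\alpha e^{-\lambda})/(Ae^{-\lambda})$, and substituting gives the asserted expression for $\psi_Z(\lambda)$, with $\psi_Z(\infty)=b(1-\alpha/A)$ identified as the total mass $\theta$.

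Next I would compute the L\'{e}vy measure by the transformation formula for subordinators,
\begin{equation*}
\Pi_Z(\cdot)=\int_{0+}^{\infty}P(L(u)\in\cdot)\,\Pi_{T_b}(du)=b\,P(L(1)\in\cdot)\big|_{\{n\geq 1\}}.
\end{equation*}
Since $P(L(1)=n)=\alpha^{n+1}/(A(n+1))$ from (\ref{LL}), the stated formula $\Pi_Z(n)=b\alpha^{n+1}/(A(n+1))$ for $n\geq 1$ is immediate. As a consistency check one verifies $\sum_{n\geq 1}\Pi_Z(n)=(b/A)(A-\alpha)=b(1-\alpha/A)=\theta$, matching $\psi_Z(\infty)$.

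For the transition probability I would condition on $T_b(t)\sim\text{Poisson}(bt)$, which takes values in $\{0,1,2,\ldots\}$, to write
\begin{equation*}
P(Z(t)=n)=\sum_{k=0}^{\infty}e^{-bt}\frac{(bt)^{k}}{k!}P(L(k)=n),
\end{equation*}
and insert the convolution formula (\ref{Lmn}) from Lemma 1 for $P(L(k)=n)$. Setting $u=\alpha bt/A$ and pulling the factor $(\alpha/A)^{k}$ into $(bt)^{k}$, the double sum becomes
\begin{equation*}
e^{-bt}\frac{\alpha^{n}}{n!}\sum_{k=0}^{\infty}\frac{u^{k}}{k!}\sum_{j=0}^{k\wedge n}\frac{k!}{(k-j)!}B_{n,j}(c_{\bullet}).
\end{equation*}
The decisive step is to exchange the order of summation and collapse the inner geometric-type series: for fixed $j\leq n$, after shifting $k=j+m$, one gets $\sum_{k\geq j}u^{k}/(k-j)!=u^{j}e^{u}$. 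The factor $e^{-bt}\cdot e^{u}$ collapses to $e^{-\theta t}$ with $\theta=b-\alpha b/A$, and what remains is exactly the finite sum $\sum_{j=0}^{n}(\alpha bt/A)^{j}B_{n,j}(c_{\bullet})$ in (\ref{Z}).

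The only nontrivial obstacle is the legitimacy and bookkeeping of this interchange of summations; once done, everything follows from the combinatorial identities established in Lemma 1 and the subordination rules recalled in Theorem~\ref{thm2}. As a final sanity check, one can verify the infinitesimal-generator relation $\lim_{t\downarrow 0}P(Z(t)=n)/t = \Pi_Z(n)$ by isolating the $j=1$ term in (\ref{Z}), since $B_{n,1}(c_{\bullet})=c_{n}=n!/(n+1)$ yields $\alpha^{n}c_{n}\cdot(\alpha b/A)/n!=b\alpha^{n+1}/(A(n+1))$, confirming consistency with the L\'{e}vy measure computed independently.
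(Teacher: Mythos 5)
Your derivations of the Bernstein function (composition $\psi_Z=\psi_{T_b}\circ\psi_L$) and of the L\'{e}vy measure (integrating $P(L(u)=n)$ against $\Pi_{T_b}=b\,\delta_1$, giving $\Pi_Z(n)=bP(L(1)=n)$) coincide with the paper's proof, including the check $\theta_Z=b(1-\alpha/A)$. For the transition probability you take a route that differs from the paper's proof proper: the paper, having computed $\Pi_Z$, rebuilds $Z(t)$ directly as a compound Poisson process with rate $\theta$, writing the $k$-fold convolutions of the normalized L\'{e}vy measure via $B_{n,k}(c_\bullet)$ and summing against a Poisson$(\theta t)$ weight, so the factor $e^{-\theta t}$ and the finite sum over $k\le n$ appear at once; you instead condition on $T_b(t)\sim\mathrm{Poisson}(bt)$, insert the $k$-fold convolution formula (\ref{Lmn}) from Lemma~1, interchange the two sums and collapse $\sum_{k\ge j}u^{k}/(k-j)!=u^{j}e^{u}$ with $u=\alpha bt/A$, recovering $e^{-\theta t}=e^{-bt}e^{u}$ at the end. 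This second route is exactly the computation the paper records in the remark following Theorem~\ref{thm3} (its formula (\ref{St})), so both arguments are sanctioned by the paper; yours leans on Lemma~1 and an interchange of summation (harmless here, since all terms are nonnegative, so the "legitimacy" issue you flag is settled by Tonelli), while the paper's proof avoids any infinite-series manipulation beyond the defining Poisson mixture and makes the role of $\Pi_Z$ and $\theta$ structurally transparent. Your closing check $\lim_{t\downarrow 0}P(Z(t)=n)/t=\Pi_Z(n)$ via the $j=1$ term also matches the paper's concluding verification.
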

\begin{proof}
Once again, the composition of two Bernstein functions\index{Bernstein function} is obvious:
\begin{equation*}
\psi _{Z}(\lambda )=b (1-\exp (-\psi _{L}(\lambda \xch{))).}{)).}
\end{equation*}
The L\'{e}vy measure\index{L\'{e}vy measure} of the subordinated process\index{subordinated process} is given by the
following infinite sum, as it is shown in \cite{Sato,Ci},
\begin{equation*}
\Pi _{Z}( n) =\sum ^{\infty }_{k=1} P(L(k)=n) \Pi _{T}(k)= b P(L(1)=n)=
\frac{b}{A}\frac{\alpha ^{n+1}}{(n+1)},\quad n=1,2,\ldots,
\end{equation*}
because the normalised L\'{e}vy measure $ \widetilde{\Pi }_{L}(n),
n=1,2,\ldots $, of the Poisson process is exactly the delta function in
$ n=1 $. The total mass of the L\'{e}vy measure\index{L\'{e}vy measure} $ \Pi _{Z}(n),
n=1,2,\ldots$, is calculated directly from (\ref{LL}) as $ \theta _{Z}=
\frac{b}{A}(A-\alpha )$. The exponential generating function\index{exponential generating function} of the
L\'{e}vy measure $ \Pi _{Z}$ is given by (\ref{G}) and (\ref{GE}) as
follows:
\begin{equation*}
Q_{Z}(s)=\frac{b \alpha }{A}G(s)=\frac{b \alpha }{A}\sum ^{\infty }
_{k=1}g_{k} \frac{s^{k}}{k!},\quad g_{k}=\frac{\alpha ^{k} k!}{k+1}.
\end{equation*}
Let
\begin{equation*}
( \xi _{1}, \xi _{2}, \ldots , \xi _{k}, k=1,2,\ldots)
\end{equation*}
be independent copies of the positive random variable $\xi $ with p.m.f.
\begin{equation*}
P(\xi =n)=\Pi _{Z}(n) /\theta ,\quad n=1,2,\ldots,\quad \theta =\theta
_{Z}=b-\frac{b \alpha }{A}.
\end{equation*}
The normalised probability convolution distribution\index{normalised probability convolution distribution} is given by
\begin{equation*}
P(\xi _{1}+\xi _{2}+\cdots + \xi _{k}=n )=\left (\frac{\alpha }{A-\alpha }\right )
^{k} B_{n,k}(c_{\bullet }) \frac{\alpha ^{n} k!}{n!},\quad \frac{
\alpha }{A-\alpha }=\frac{b\alpha }{A\theta } .
\end{equation*}
Then the elementary transformations lead to (\ref{Z}) as follows:
\begin{align*}
P(Z(t)=n)&=\sum ^{\infty }_{k=0}e^{-\theta t}\frac{(\theta t)^{k}}{k!}
\left (\frac{\alpha }{A-\alpha }\right )^{k} B_{n,k}(c_{\bullet }) \frac{
\alpha ^{n} k!}{n!}
\\
&=e^{-\theta t}\frac{\alpha ^{n} }{n!} \sum ^{n}_{k=0} \left (\frac{
\alpha \theta t}{A-\alpha }\right )^{k} B_{n,k}(c_{\bullet })=e^{-
\theta t}\frac{\alpha ^{n} }{n!} \sum ^{n}_{k=0} \left (\frac{\alpha bt}{A}\right )
^{k} B_{n,k}(c_{\bullet }).
\end{align*}
In particular,
\begin{equation*}
P(Z(t)=0)=e^{-\theta t},\quad P(Z(t)=1)= \frac{\alpha ^{2} b t e^{-
\theta t}}{2A}.
\end{equation*}
Knowing that $ B_{2,1}=c_{2}=\frac{2!}{3}$ and $ B_{2,2}=(c_{1})^{2}=
\frac{1}{4}$ we find
\begin{equation*}
P(Z(t)=2)=e^{-\theta t}\frac{\alpha ^{2}}{2!}\left \{
\frac{2 \alpha bt}{3A}+ \left (\frac{\alpha bt}{2A}\right )^{2}
\right \}  .
\end{equation*}
In the same way, as $ B_{3,1}=\frac{3!}{4}$, $ B_{3,2}=1 $ and
$ B_{3,3}=\frac{1}{8}$ we obtain
%
\begin{equation}
\label{Z3}
P(Z(t)=3)=e^{-\theta t}\frac{\alpha ^{3}}{3!}\left \{
\frac{ \alpha bt}{A}\frac{3!}{4}+ \left (\frac{ \alpha bt}{A}\right )
^{2} + \left (\frac{ \alpha bt}{2A}\right )^{3}\right \}  ,
\end{equation}
and so on.
\end{proof}

\begin{remark}
{In this situation, the range of the random time process $ T_{b}(t)$
is a discrete integer-valued set $ Z_{+}=\{0, 1, 2,\ldots \}$. The
subordination\index{subordination} by Bochner gives the transition probability\index{transition probability} of the subordinated
process $ Z(t)=L(T_{b}(t))$ as the following conditional probability:
\begin{equation}\label{St}
P(Z(t)=n)= \sum ^{\infty }_{k=0}P(L(k)=n)e^{-bt} \frac{(bt)^{k}}{k!}.
\end{equation}
The transition probability\index{transition probability} of the ground  process $L(t)$\index{ground process} for integer-valued
time $t=k$ is given by the $ k$-fold convolution of the representative
random variable $ L(1)$, as the two equivalent expressions (\ref{Lmn}) and
(\ref{Lm}).}

After replacing $P(L(k)=n)$ in (\ref{St}) by (\ref{Lmn}),  it is enough to
exchange the order of summation to prove (\ref{Z}), as follows:
\begin{align*}
P(Z(t)=n)&=  \sum ^{\infty }_{k=0}\left (\frac{\alpha }{A}\right )^{k} \frac{\alpha ^{n} }{n!} \sum ^{k\wedge n}_{j=0}\frac{k!}{(k-j)!} B_{n,j}(c_{\bullet })\frac{(bt)^{k}
e^{-bt}}{k!}
\\
&= \frac{\alpha ^{n} e^{-bt}}{n!} \sum ^{n}_{j=0}\left
(\frac{\alpha b t}{A}\right )^{j} B_{n,j}(c_{\bullet })\sum ^{\infty
}_{k=j}\left (\frac{\alpha b t}{A}\right )^{k-j} \frac{1}{(k-j)!}.
\end{align*}

But, if we take $P(L(k)=n)$ from (\ref{Lm}), then replacing it in (\ref{St})
we obtain
\begin{equation}\label{Zb}
P(Z(t)=n)= e^{-bt}\frac{\alpha ^{n}}{n!} \sum ^{\infty }_{k=0} \left (\frac{\alpha b t}{A}\right )^{k} |s(k+n,k)|\frac{n!}{(n+k)!}.
\end{equation}
The relation of the Stirling numbers\index{Stirling numbers} on the binomial coefficients explains the equivalence of (\ref{Zb}) to (\ref{Z}) and the presence of $e^{-bt} \neq e^{-\theta t}$ in (\ref{Zb}). For $ n=1 $ we have
\begin{equation*}|s(k+1,k)|=\frac{(k+1)!}{(k-1)! 2!},\quad |s(1,0)|=0.\end{equation*}
Then
\begin{align*}
P(Z(t)=1)&=\alpha e^{-bt} \sum ^{\infty }_{k=1}
\left (\frac{\alpha b t}{A}\right )^{k} |s(k+1,k)|\frac{1!}{(1+k)!}
\\
&=\alpha e^{-bt}\frac{\alpha b t}{ 2 A} \sum ^{\infty }_{k=1}\frac{1}{(k-1)!}
\left (\frac{\alpha b t}{A}\right )^{k-1} = e^{-bt}e^{\frac{\alpha b t}{A}} \frac{\alpha ^{2} b t}{ 2 A}.
\end{align*}
For $ n=2 $ we have
\begin{equation*}
|s(k+2,k)|=\frac{[3(k+2)-1]}{4}\frac{(k+2)!}{(k-1)! 3!},\quad |s(2,0)|=0,\quad |s(3,1)|=2.
\end{equation*}
We calculate
\begin{equation*}
P(Z(t)=2)=\frac{\alpha ^{2}}{2!} e^{-bt} \sum ^{\infty }_{k=1}\left (\frac{\alpha b t}{A}\right )^{k}\left ( \frac{3k+5}{4}\right )\left (\frac{2!}{(k -1)! 3!}\right ).\end{equation*}
Obviously,
\begin{equation*}\left ( \frac{3k+5}{4}\right )\left (\frac{2!}{(k -1)! 3!}\right )=\frac{1}{4(k-2)!}+\frac{2}{3(k-1)!},\quad k=2,3,\ldots. \end{equation*}
It means that the probability $P(Z(t)=2)$ is equal to:
\begin{equation*}
\frac{\alpha ^{2} e^{-bt}}{2!}\left \{  \frac{1}{4}\left (\frac{\alpha b t}{A}\right )^{2} \sum ^{\infty }_{k=2}\left (\frac{\alpha b t}{A}\right )^{k-2}\frac{1}{(k -2)!} +\frac{2}{3}\frac{\alpha b t}{A}\sum ^{\infty }_{k=1}\left (\frac{\alpha b t}{A}\right )^{k-1}\frac{1}{(k -1)!}\right \}.
 \end{equation*}
Finally,
\begin{equation*}P(Z(t)=2)=\frac{\alpha ^{2}}{2!} e^{-bt}e^{\frac{\alpha b t}{A}}\left \{  \frac{1}{4}\left (\frac{\alpha b t}{A}\right )^{2} +\frac{2}{3}\frac{\alpha b t}{A}\right \}  . \end{equation*}
For $ n=3$ we use the consecutive relations of the Stirling numbers\index{Stirling numbers} on the binomial coefficients:
\begin{equation*}|s(k+3,k)|=\frac{(k+3)!}{(k+1)! 2!} \frac{(k+3)!}{(k-1)! 4!},\quad |s(3,0)|=0,\quad |s(3,2)|=3 .\end{equation*}
The equivalent representation of $P(Z(t)=3)$
will be transformed as follows:
\begin{equation*}P(Z(t)=3)=\frac{\alpha ^{3}}{3!}e^{-bt}\sum ^{\infty }_{k=1}\left (\frac{\alpha b t}{A}\right )^{k} \frac{(k+3)(k+2)}{8(k-1)!}.\end{equation*}
Obviously,
\begin{equation*}\frac{(k+3)(k+2)}{8(k-1)!}=\frac{1}{8(k-3)!} +\frac{1}{(k-2)!} +\frac{3!}{4 (k-1)!},\quad k=3,4,\ldots. \end{equation*}
In the same way we obtain (\ref{Z3}),
and so on. The Stirling numbers\index{Stirling numbers} are very convenient in applications because they have recurrence relation and tables of their values.
\end{remark}
We confirm the expression of the L\'{e}vy measure\index{L\'{e}vy measure} by the following
limit:
\begin{equation*}
\lim _{t \downarrow 0}\frac{ P(Z(t)=n)}{t}=\frac{\alpha ^{n}}{n!} \frac{
\alpha b}{A}B_{n,1}(c _{\bullet })= \frac{b \alpha ^{n+1}}{An!}
\frac{n!}{n+1}=\Pi _{Z}(n),\quad n=1,2,\ldots.
\end{equation*}

\section{Applications}%
\label{sec6}
An important problem in many applications is how to recognize the
original process from the observation data when the registration is
randomly perturbed. The problem is growing in cases when the process is
composed of several different processes. We see that the probabilistic
characteristics for the couples of processes $ Y(t)$ and $L(t)$ as well
as for $Z(t)$ and $ X(t)$ are similar. The best way to demonstrate their
different properties is the comparison between Bernstein functions\index{Bernstein function} and
L\'{e}vy measures\index{L\'{e}vy measure} with different parameters. The Bernstein functions
$ \psi _{L}(\lambda )$\index{Bernstein function} and $ \psi _{Y}(\lambda )$, how they are defined
in Theorems~\ref{thm2} and \ref{thm3}, contain the iterated logarithmic function and have
the following derivatives at zero:
\begin{equation*}
\psi '_{L}(\lambda )=\frac{\alpha e^{-\lambda }}{(1-\alpha e^{-\lambda
})(-\log (1-\alpha e^{-\lambda }\xch{))}{)} }-1,\quad \psi '_{L}(0)=\frac{
\alpha }{A(1-\alpha )}-1
\end{equation*}
and
\begin{gather*}
\psi '_{Y}(\lambda )=\frac{\alpha \beta e^{-\lambda }}{\{1+A \beta +
\beta \log (1-\alpha e^{-\lambda })\}(1-\alpha e^{-\lambda })},
\\
\psi '_{Y}(0)=\frac{\alpha \beta }{ (1+A\beta -A\beta )(1-\alpha )}=\frac{
\alpha \beta }{1-\alpha }.
\end{gather*}
The first cumulants are equal to the corresponding mathematical
expectations $ E[L(1)]$ or $ E[Y(1)]$ and for the subordinated processes\index{subordinated process}
we have
\begin{equation*}
E[Y(1)]= E[X(1)] E[T_{\beta }(1)].
\end{equation*}
Similarly, the Bernstein functions\index{Bernstein function} for processes $X(t)$ and
$Z(t)$ are denoted by $ \psi _{X}(\lambda )$ and $ \psi _{Z}(\lambda )$
as in Theorems~\ref{thm2} and \ref{thm3}. Their derivatives at zero are as follows:
\begin{equation*}
\psi '_{X}(\lambda )=\frac{\alpha e^{-\lambda }}{1-\alpha e^{-\lambda
}},\quad \psi '_{X}(0)=\frac{\alpha }{1-\alpha }
\end{equation*}
and
\begin{gather*}
\psi '_{Z}(\lambda )=\frac{b}{A} \frac{\alpha e^{-\lambda }+(1-\alpha
e^{-\lambda })\log (1-\alpha e^{-\lambda })}{ e^{-\lambda }(1-\alpha
e^{-\lambda })},
\\
\psi '_{Z}(0)=\frac{b}{A} \frac{\alpha +(1-\alpha )\log (1-\alpha )}{1-
\alpha }=\frac{b}{A}\left (\frac{\alpha }{1-\alpha } -A\right ).
\end{gather*}
For the application, we constructed two different case tests, named as
\textit{Selection A} and \textit{Selection B}.

\textit{Selection A.} We can choose the parameters $\beta $ and $b$ in
a way that the corresponding total masses of the L\'{e}vy measures\index{L\'{e}vy measure} are
equal, $\theta _{L}=\theta _{Y} $, and $\theta _{X}=\theta _{Z} $, as
follows:
\begin{equation*}
\beta =\frac{A-\alpha }{A\alpha } <1,\quad b=\frac{A^{2}}{A-\alpha }>1
.
\end{equation*}
By this choice (selection) of parameters $ \beta $ and $b$ the
mathematical expectations are in the following inequalities:
\begin{equation*}
\psi '_{L}(0)< \psi '_{Y}(0)< \psi '_{X}(0)< \psi '_{Z}(0).
\end{equation*}
Namely,
\begin{equation*}
\frac{\alpha }{A(1-\alpha )}-1< \frac{ (A-\alpha )}{A(1-\alpha )}<\frac{
\alpha }{1-\alpha }< \frac{A}{A-\alpha }\left (\frac{\alpha }{1-
\alpha } -A\right ).
\end{equation*}
The values of the L\'{e}vy measures\index{L\'{e}vy measure} $\Pi _{X}$ and $\Pi _{Z}$ at
$ n=1$ satisfy the following inequality $ \Pi _{Z}(1)<\Pi _{X}(1)$, when
$ \sum ^{\infty }_{n=1}\Pi _{Z}(n) = \sum ^{\infty }_{n=1}\Pi _{X}(n)=A$,
as it is demonstrated in Figure \ref{f1} and in Figure \ref{f2}.
%
\begin{figure}
\includegraphics{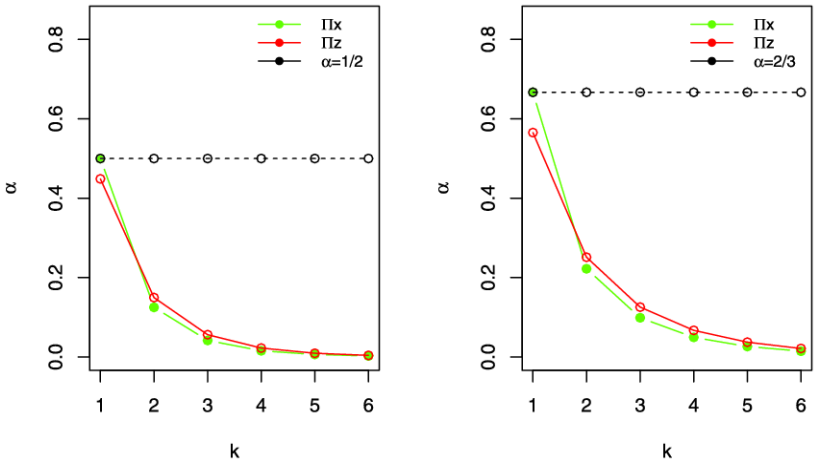}
\caption{Selection A. Comparison between the L\'{e}vy measure $\Pi _{X}$ and $\Pi _{Z}$ for $\alpha =1/2$ (left) and $\alpha =2/3$ (right), where $ \sum ^{\infty }_{n=1}\Pi _{Z}(n) = \sum ^{\infty }_{n=1}\Pi _{X}(n)=A $}
\label{f1}
\end{figure}
\begin{figure}
\includegraphics{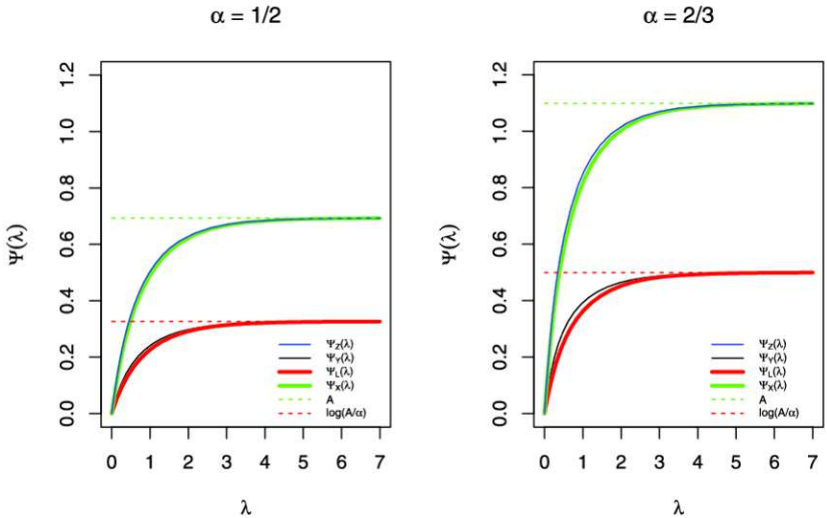}
\caption{Selection A. Comparative plot of main Bernstein functions after rescaling with $\alpha $ equal to $1/2$ and $2/3$, where $\psi '_{L}(0) < \psi '_{Y}(0)< \psi '_{X}(0)< \psi '_{Z}(0)$, knowing that $\theta _{L}=\theta _{Y}=\log (\frac{A}{\alpha })$ and $\theta _{X}=\theta _{Z}=A=-\log (1-\alpha ) $}
\label{f2}
\end{figure}
%
\begin{figure}
\includegraphics{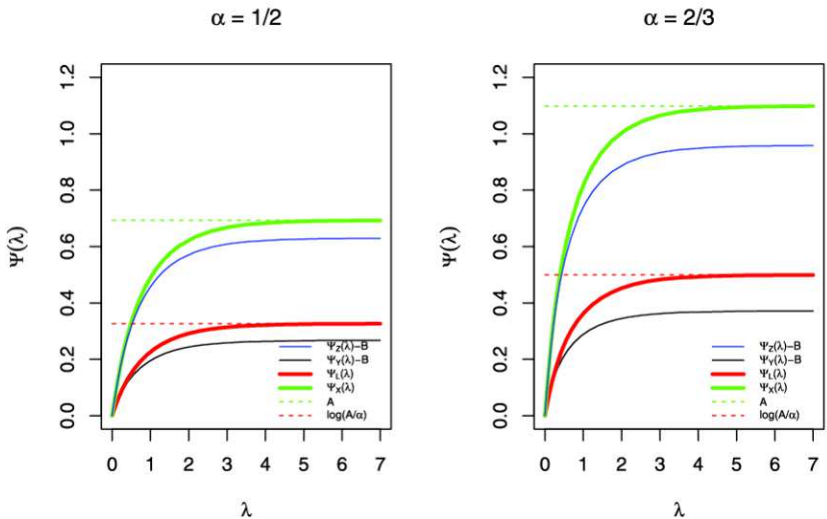}
\caption{Selection B: Comparative plot of main Bernstein functions after rescaling with $\alpha $ equal to $1/2$ and $2/3$, where $\psi _{Y}(\infty )< \psi _{L}(\infty )< \psi _{Z}(\infty )< \psi _{X}(\infty )$, knowing that $ \psi '_{L}(0)= \psi '_{Y}(0)$ and $ \psi '_{Z}(0)= \psi '_{X}(0)$}
\label{f3}
\end{figure}

\textit{Selection B.} If we choose
\begin{equation*}
\beta = \frac{1}{A}-\frac{1-\alpha }{\alpha }>0,\quad b=\frac{A\alpha
}{\alpha -A(1-\alpha )}>0,
\end{equation*}
then the mathematical expectations $ \psi '_{L}(0)= \psi '_{Y}(0)$ and
$ \psi '_{Z}(0)= \psi '_{X}(0)$, and the total masses of the L\'{e}vy
measures\index{L\'{e}vy measure} are in the following inequalities:
\begin{equation*}
\psi _{Y}(\infty )< \psi _{L}(\infty )< \psi _{Z}(\infty )< \psi _{X}(
\infty ).
\end{equation*}
Specifically,
\begin{equation*}
\log \left (2-\frac{A(1-\alpha )}{\alpha }\right )<\log \left (\frac{A}{
\alpha }\right )<\frac{\alpha (A-\alpha )}{A\alpha -A+\alpha }<A.
\end{equation*}
It is demonstrated in Figure \ref{f3}.

\begin{remark}
{All these inequalities are related to the following:
\begin{equation*} A-\alpha <A\alpha <2(A-\alpha )\end{equation*} and
\begin{equation*}A^{2}<\frac{\alpha ^{2}}{1-\alpha }, \end{equation*} where
we remark only that using (\ref{H}) we have
\begin{align*} A^{2}&=(-\log (1-\alpha ))^{2}=2\sum ^{\infty }_{n=2}|s(n,2)|\frac{\alpha ^{n}}{n!}
\\
&=2\sum ^{\infty }_{n=2}\frac{\alpha ^{n} H_{n-1}}{n}
= \alpha ^{2} +\alpha ^{3} +\frac{11}{12}\alpha ^{4} + \frac{10}{12}\alpha ^{5} +\frac{137}{180}
\alpha ^{6}\cdots \end{align*}
and
\begin{equation*}\frac{\alpha ^{2}}{1-\alpha }=\alpha ^{2}(1+\alpha +\alpha ^{2}+\cdots).\end{equation*}}

\end{remark}

\section{Conclusion}%
\label{sec7}
The Negative-Binomial process in consideration can be constructed by the
subordination\index{subordination} of a Poisson process by a Gamma process. In this way, the
process $Y(t)$ is a Poisson process subordinated by an iterated Gamma
process. In potential theory, a Gamma subordinator\index{Gamma subordinator} and an iterated Gamma
subordinator\index{Gamma subordinator} are classified as slow subordinators.

In the \textit{selection A}, the inter-arrival times of the processes
$L(t)$ and $Y(t) $ are exponentially distributed with the same parameter
$ \theta _{L}=\theta _{Y}$, but the mathematical expectation of
the Logarithmic L\'{e}vy process\index{L\'{e}vy process} in the unit time interval $ E[L(1)]$ is
less than $ E[Y(1)]$. It is the same for the processes $ X(t)$ and
$Z(t)$.

In the \textit{selection B}, the mathematical expectations of jumps
altitude for the processes $L(t) $ and $Y(t) $ are equal $
\psi '_{L}(0)= \psi '_{Y}(0)$ and $E[L(1)]=E[Y(1)] $, but the mean
number of jumps in the unit time interval of the Logarithmic process is
greater than that for the subordinated process\index{subordinated process} $Y(t)$, $\theta _{L}>
\theta _{Y} $. It is the same for the processes $ X(t)$ and $Z(t) $,
$\theta _{X}>\theta _{Z} $.

In the general setting: $ Y(t)=X(T(t))$, when the underlying process
$X(t)$ is a compound Poisson process without drift, any randomness of
$T(t)$ before it passes the level given by the first jump time of
$X(t)$ is not reflected by $Y(t)=X(T(t))$, see \cite{M}.

\begin{acknowledgement}
The authors are very thankful to the anonymous referees for the valuable
comments on the paper.
\end{acknowledgement}




\end{document}